\newtheorem{theorem}{Theorem}[section]
\newtheorem{corollary}[theorem]{Corollary}
\newtheorem{proposition}[theorem]{Proposition}
\newtheorem{lemma}[theorem]{Lemma}
\theoremstyle{definition}
\newtheorem{definition}[theorem]{Definition}
\newtheorem{remark}[theorem]{Remark}
\let\tensor=\otimes
\let\nbd=\nobreakdash
\let\iso=\cong
\newcommand{\id}{\mathrm{id}}
\newcommand{\bZ}{\ensuremath{\mathbb{Z}}}
\newcommand{\ie}{{\it i.e.}}
\newcommand{\totlt}{\ensuremath{{}{^\mathrm{lt}}\mathrm{Tot}\,}}
\newcommand{\totrt}{\ensuremath{\mathrm{Tot}^\mathrm{rt}\,}}
\newcommand{\prodlt}{\ensuremath{\sideset{^\textrm{lt}}{}\prod}}
\newcommand{\prodrt}{\ensuremath{\sideset{}{^\textrm{rt}}\prod}}
\begin{document}

\title[Double complexes and vanishing of Novikov cohomology]{Double
  complexes and vanishing\\ of Novikov cohomology}

\date{\today}

\author{Thomas H\"uttemann}

\address{Thomas H\"uttemann\\ Queen's University Belfast\\ School of
  Mathematics and Physics\\ Pure Mathematics Research Centre\\ Belfast
  BT7~1NN\\ Northern Ireland\\ UK}

\email{t.huettemann@qub.ac.uk}

\urladdr{http://huettemann.zzl.org/}

\subjclass[2000]{Primary 18G35; Secondary 55U15}

\thanks{This work was supported by the Engineering and Physical
  Sciences Research Council [grant number EP/H018743/1].}

\begin{abstract}
  We consider non-standard totalisation functors for double complexes,
  involving left or right truncated products. We show how properties
  of these imply that the algebraic mapping torus of a self map~$h$ of
  a cochain complex of finitely presented modules has trivial negative
  \textsc{Novikov} cohomology, and has trivial positive
  \textsc{Novikov} cohomology provided $h$ is a quasi-isomorphism. As
  an application we obtain a new and transparent proof that a finitely
  dominated cochain complex over a \textsc{Laurent} polynomial ring
  has trivial (positive and nnegative) \textsc{Novikov} cohomology.
\end{abstract}

\maketitle

Finiteness conditions for chain complexes of modules play an important
role in both algebra and topology. For example, given a group~$G$ one
might ask whether the trivial $G$\nbd-module~$\bZ$ admits a resolution
by finitely generated projective $\bZ[G]$\nbd-modules; existence of
such resolutions is relevant for the study of group cohomology of~$G$,
and has applications in the theory of duality groups \cite{Brown}. For
topologists, finite domination of chain complexes is related, among
other things, to questions about finiteness of $CW$ complexes, the
topology of ends of manifolds, and obstructions for the existence of
non-singular closed $1$\nbd-forms \cite{Ranicki-findom, Schuetz}.

A cochain complex~$C$ of $R[z,z^{-1}]$\nbd-modules is called {\it
  finitely dominated} if it is homotopy equivalent, as a complex of
$R$\nbd-modules, to a bounded complex of finitely generated projective
$R$\nbd-modules. Finite domination of~$C$ can be characterised in
various ways; \textsc{Brown} considered compatibility of the functors
$M \mapsto H_* (C;M)$ and $M \mapsto H^* (C;M)$ with products and
direct limits, respectively \cite[Theorem~1]{Brown}, while
\textsc{Ranicki} showed that $C$ is finitely dominated if and only if
the \textsc{Novikov} cohomology of~$C$ is trivial
\cite[Theorem~2]{Ranicki-findom} (see also
Definition~\ref{def:novikov_cohomology} and
Corollary~\ref{cor:ranicki} below).

Our approach to \textsc{Novikov} cohomology is elementary, and
involves a non-standard totalisation functor for double
complexes. Rewriting mapping tori as total complexes of suitable
double complexes, cf.~Remark~\ref{rem:T_by_double} below, we prove a
vanishing result for \textsc{Novikov} cohomology
(Theorem~\ref{thm:torus_cohomology}). As an application we obtain a
new proof of \textsc{Ranicki}'s necessary criterion for finite
domination over \textsc{Laurent} polynomial rings in one variable
(Corollary~\ref{cor:ranicki}). --- The case of several indeterminates
is discussed in papers by \textsc{Sch\"utz} \cite{Schuetz}, and by
\textsc{H\"uttemann} and \textsc{Quinn} \cite{Iteration}.

\section{Truncated product totalisation of double complexes}
\label{sec:realisation}

Let $R$ be a ring with unit. A {\it double complex\/} $D^{*,*}$ is a
$\bZ \times \bZ$\nbd-indexed collection $\big(D^{p,q}\big)_{p,q \in
  \bZ}$ of right $R$\nbd-modules together with ``horizontal'' and
``vertical'' differentials
\[d^h \colon D^{p,q} \rTo D^{p+1,q} \quad \text{and}
\quad d^v \colon D^{p,q} \rTo D^{p,q+1}\] which satisfy the conditions
\[d^h \circ d^h = 0 \ , \quad d^v \circ d^v = 0 \ , \quad d^h \circ
d^v = -d^v \circ d^h \ .\] Note that the differentials anti-commute.
A ``horizontal'' cochain complex in the category of ``vertical''
cochain complexes of right $R$\nbd-modules can be converted to a
double complex in this sense by changing the differential of the $p$th
column by the sign $(-1)^p$. --- We will in general consider unbounded
double complexes so that $D^{p,q} \neq 0$ may occur for $|p|$ and
$|q|$ arbitrarily large.

There are two standard ways to convert a double complex into a cochain
complex via ``totalisation'', one involving direct sums, and one
involving direct products. The former results in a cochain complex
$\mathrm{Tot}_\oplus D^{*,*}$ given by
\[\big( \mathrm{Tot}_\oplus D^{*,*} \big)^n = \bigoplus_{p \in \bZ}
D^{p, n-p}\] with coboundary $d = d^h + d^v$, the latter is defined
analogously with ``$\bigoplus$'' above replaced by ``$\prod$''.

In this paper, which was partially inspired by a preprint of
\textsc{Bergman} \cite[\S6]{Bergman}, we will consider two
non-standard totalisation functors formed by using truncated
products. Given a $\bZ$\nbd-indexed family of modules $M_i$ we define
the {\it left truncated product\/} to be the module
\[\prodlt_i M_i = \bigoplus_{i<0} M_i \,\oplus\, \prod_{i \geq 0} M_i \ ;\]
the elements of this truncated product are ``sequences'' $(m_i)_{i \in
  \bZ}$ with $m_i \in M_i$ such that $m_i = 0$ for $i \ll 0$, which we
might also write in the form $(m_i)_{i \geq k}$ or even $\sum_{i \geq
  k} m_i z^i$ with $z$ being an indeterminate. The latter notation
suggests thinking of such a sequence as a formal \textsc{Laurent}
series with coefficients in the modules~$M_i$. For emphasis and ease
of notation we introduce special notation for the case that all
the~$M_i$ are the same module~$M$; we let $M((z))$ denote the module
of formal \textsc{Laurent} series with coefficients in~$M$,
\[M((z)) = \prodlt M = \Big\{ \sum_{i \geq k} m_i z^i \,|\, k \in
\bZ,\ m_i \in M \Big\} \ .\] Dually we define the {\it right truncated
  product\/} to be the module
\[\prodrt_i M_i = \prod_{i \leq 0} M_i \,\oplus\, \bigoplus_{i>0}
M_i\] of formal \textsc{Laurent} series which are finite to the
right, and define $M((z^{-1}))$ by setting
\[M((z^{-1})) = \prodrt M = \Big\{ \sum_{i \leq k} m_i z^i \,|\, k \in
\bZ,\ m_i \in M \Big\} \ .\]

Note that $R((z))$ and $R((z^{-1}))$ are rings of formal
\textsc{Laurent} series, also known as \textsc{Novikov} rings; there
is a natural identification
\[R((z)) = R[[z]][z^{-1}] \quad \text{and} \quad R((z^{-1})) =
R[[z^{-1}]][z] \ .\] The module $M((z))$ has the structure of an
$R((z))$\nbd-module given by multiplication of formal \textsc{Laurent}
series. Similarly, $M((z^{-1}))$ can be equipped with an obvious
$R((z^{-1}))$\nbd-module structure.

\begin{definition}
  Let $D^{*,*}$ be a double complex. We define its {\it left truncated
    totalisation\/} to be the cochain complex $\totlt D^{*,*}$ which in
  cochain level~$n$ is given by the left truncated product
  \[\big( \totlt D^{*,*} \big)^n = \prodlt_p D^{p, n-p} \ ;\] the
  differential is given by $d = d^h + d^v$. --- Dually, we define the
  {\it right truncated totalisation\/} to be the cochain complex
  $\totrt D^{*,*}$ which in chain level~$n$ is given by the right
  truncated product
  \[\big( \totrt D^{*,*} \big)^n = \prodrt_p D^{p,n-p}\] with
  differential induced as above.
\end{definition}

\begin{proposition}[{\cite[Corollary~29]{Bergman}}]
  \label{prop:main}
  Suppose the double complex $D^{*,*}$ has exact columns. Then $\totlt
  D^{*,*}$ is acyclic. Dually, if $D^{*,*}$ has exact rows then $\totrt
  D^{*,*}$ is acyclic.
\end{proposition}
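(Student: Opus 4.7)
The plan is to prove the first statement---that exact columns force acyclicity of $\totlt D^{*,*}$---by constructing an explicit primitive for an arbitrary cocycle via induction along the $p$-direction; the dual statement will then follow by a symmetric argument with rows and columns interchanged. An element of $(\totlt D^{*,*})^n$ is a family $x = (x_p)_{p \in \bZ}$ with $x_p \in D^{p,n-p}$ and $x_p = 0$ for $p \ll 0$. The cocycle condition $(d^h + d^v)(x) = 0$ reads componentwise as $d^h(x_{p-1}) + d^v(x_p) = 0$ for every $p$. To produce a primitive I would seek $y = (y_p)$ of degree $n-1$, with $y_p \in D^{p,n-1-p}$, $y_p = 0$ for $p \ll 0$, and $d^h(y_{p-1}) + d^v(y_p) = x_p$ for every $p$.

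The construction proceeds by induction on $p$, starting from any integer $k$ below which $x$ vanishes. For the base case I set $y_p = 0$ for all $p < k$; the cocycle relation at $p = k$ reduces to $d^v(x_k) = 0$ (since $x_{k-1} = 0$), so exactness of the $k$th column supplies $y_k$ with $d^v(y_k) = x_k$. For the inductive step, assuming $y_{p-1}$ has been chosen so that $d^v(y_{p-1}) = x_{p-1} - d^h(y_{p-2})$, I construct $y_p$ with $d^v(y_p) = x_p - d^h(y_{p-1})$ by invoking column exactness; this requires verifying that $x_p - d^h(y_{p-1})$ is a $d^v$-cocycle, which a short calculation yields using the inductive hypothesis, the anticommutation $d^h d^v = -d^v d^h$, and the cocycle relation at index $p$.

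The left-truncation hypothesis is used in an essential way: it supplies the base case for the induction, which would fail for the untruncated product totalisation, whereas in the direct-sum totalisation the iterative process would typically produce a primitive with infinite support on the right. Conversely, my induction only propagates to the right, so the construction preserves left-truncation and produces a genuine element of $\totlt D^{*,*}$ with $d(y) = x$. The dual statement about $\totrt$ with exact rows follows from the same template, reversing the direction of induction and swapping the roles of $d^h$ and $d^v$. The main obstacle I anticipate is just careful sign and index bookkeeping in the inductive step.
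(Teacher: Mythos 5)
Your proposal is correct and follows essentially the same route as the paper: pick a cocycle supported in degrees $p \geq k$, construct a primitive $(y_p)$ by induction along $p$, at each stage verifying that $x_p - d^h(y_{p-1})$ is a $d^v$-cocycle so that column exactness supplies $y_p$, and obtain the dual statement by symmetry. The only cosmetic difference is that the paper folds your separate base case into the inductive step by setting $y_j = 0$ for $j < k$ and starting the induction at $i = k$.
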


\begin{proof}
  We prove the first statement only. Abbreviate $\totlt D^{*,*}$
  by~$C$. Suppose $x \in C^n$ is a cocycle. We can write $x = (x_i)_{i \geq
    k}$ with $x_i \in D^{i, n-i}$, and setting $x_j = 0$ for $j<k$ the
  condition $d(x) = 0$ translates into
  \begin{equation}
    \label{eq:middle}
    d^v(x_i) + d^h (x_{i-1}) = 0 \quad \text{for } i \geq k \ .
  \end{equation}
  Set $y_j = 0$ for $j < k$. Suppose by induction on~$i$,
  starting with $i=k$, that we have constructed $y_j \in D^{j,
    n-j-1}$ for $j<i$ such that
  \begin{equation}
    \label{eq:y_j_x_j}
    d^v (y_{j-1}) + d^h (y_{j-2}) = x_{j-1} \quad \text{for } j \leq i \ .
  \end{equation}
  This implies that
  \begin{align*}
    d^v \big( x_i - d^h (y_{i-1})\big) &= \big(
    d^v (x_i) - d^v d^h (y_{i-1}) \big) \\
    &= \big( d^v (x_i) + d^h d^v (y_{i-1}) \big) \\
    &= \big( d^v (x_i) + d^h (x_{i-1} - d^h (y_{i-2})) \big) &&
    \text{(by \eqref{eq:y_j_x_j})} \\
    &= d^v (x_i) + d^h (x_{i-1}) \\
    &= 0 && \text{(by~\eqref{eq:middle})}
  \end{align*}
  so that, by exactness of columns, there exists $y_i \in D^{i,
    n-i-1}$ with $d^v (y_i) = \big( x_i - d^h (y_{i-1})\big)$  or,
  equivalently, $d^v (y_i) + d^h (y_{i-1}) = x_i$.

  This completes the inductive construction. It remains to observe
  that relation~\eqref{eq:y_j_x_j} is now satisfied for all $j \in
  \bZ$ which precisely means that the element $(y_i)_{i \geq k} \in
  C^{n-1}$ is mapped to~$x$ under the coboundary map
  of~$C$. Consequently, $x$~represents the trivial cohomology class
  in~$H^n(C)$ so that $H^n(C) = 0$.
\end{proof}

\begin{remark}
  \label{rmk:not-acyclic}
  The Proposition does not hold for the totalisation functor
  $\mathrm{Tot}_\oplus$ in place of~$\totlt$ or~$\totrt$. For example,
  let $D^{*,*}$ be the double complex defined by setting $D^{p,-p} =
  D^{p,-p-1} = \bZ$ and all other entries~$0$; the horizontal and
  vertical differentials are given by~$-\id_\bZ$ and multiplication
  by~$2$ where possible, respectively. This double complex has exact
  rows, but the element $1 \in D^{0,0} \subset \big(
  \mathrm{Tot}_\oplus D^{*,*} \big)^0$ is a cocycle representing a
  non-zero cohomology class in $H^0\, \mathrm{Tot}_\oplus
  (D^{*,*})$. The same element represents a non-trivial cohomology
  class in $H^0\, \totlt (D^{*,*})$ as well.
\end{remark}

\section{Novikov cohomology of algebraic mapping tori}

\begin{lemma}
  \label{lem:series}
  Suppose that $M$ is a finitely presented right $R$\nbd-module. There
  is a natural $R((z))$\nbd-linear isomorphism
  \[\Phi_M \colon M \tensor_R R((z)) \rTo^\iso M((z)) \ , \quad m
  \tensor \sum_{i \geq k} r_i z^i \mapsto \sum_{i \geq k} mr_i z^i \
  , \] and a similar isomorphism $\Psi_M \colon M \tensor_R
  R((z^{-1})) \rTo^\iso M((z^{-1}))$.
\end{lemma}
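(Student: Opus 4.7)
The plan is to reduce to the tautological case $M=R$ by means of a finite presentation and a five-lemma argument, exploiting the fact that both functors in sight are additive and right exact. Naturality of $\Phi_M$ in $M$ is immediate from the defining formula, so the real content is showing that $\Phi_M$ is bijective, and this I will obtain functorially rather than by a direct construction of an inverse.

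First I would observe that $\Phi_R$ is the canonical isomorphism $R \tensor_R R((z)) \iso R((z))$. Since both $M \mapsto M \tensor_R R((z))$ and $M \mapsto M((z))$ commute with finite direct sums (for the latter, a Laurent series in a direct sum is literally a pair of Laurent series), the transformation $\Phi$ is therefore an isomorphism on every finitely generated free module $R^n$. Next I would verify that $M \mapsto M((z))$ is right exact: given a right-exact sequence $A \rTo B \rTo C \rTo 0$, surjectivity of $B((z)) \rTo C((z))$ follows by lifting each coefficient individually (the starting index $k$ of the Laurent series is preserved), and exactness in the middle follows similarly coefficient by coefficient, using that a preimage in $A$ of a zero-from-index-$k$ sequence in $B$ can be chosen to start at the same index. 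Right exactness of $-\tensor_R R((z))$ is of course standard.

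Now choose a finite presentation $R^m \rTo R^n \rTo M \rTo 0$. Applying the two right-exact functors and the natural transformation $\Phi$ yields a commutative diagram with exact rows whose first two vertical arrows $\Phi_{R^m}$ and $\Phi_{R^n}$ have already been shown to be isomorphisms; the right-exact version of the five lemma then forces $\Phi_M$ to be an isomorphism as well. The argument for $\Psi_M$ is verbatim the same, with $R((z))$ and $M((z))$ replaced throughout by $R((z^{-1}))$ and $M((z^{-1}))$.

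The only genuinely non-formal point is the right exactness of $M \mapsto M((z))$: one has to convince oneself that the ``support bounded below'' condition defining the left truncated product does not obstruct coefficient-wise lifting. This is the step I expect a careful reader to scrutinise, but once it is established the rest is a routine diagram chase, and finite presentability of $M$ enters only through the existence of the presentation used to invoke the five lemma.
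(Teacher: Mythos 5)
Your proof is correct and follows the same strategy as the paper's: establish the free case, then propagate to finitely presented $M$ via a presentation and a five-lemma diagram chase, using that both functors carry $R^m\to R^n\to M\to 0$ to exact sequences. The only differences are cosmetic — you reduce the free case to $M=R$ via additivity rather than verifying it directly for $R^n$, and you observe that right exactness of $N\mapsto N((z))$ already suffices, whereas the paper invokes its full exactness.
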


\begin{proof}
  We give the proof for~$\Phi_M$ only. First suppose that $F$ is a
  free module with basis $e_1,\, e_2,\, \cdots,\, e_t$. Then every $x
  \in F \tensor_R R((z))$ can be written uniquely in the form $x =
  \sum_{j=1}^t e_j \tensor f_j$ with $f_j \in R((z))$. There exist
  elements $k \in \bZ$ and $r_{ij} \in R$ with $f_j = \sum_{i \geq k}
  r_{ij} z^i$, and~$\Phi_F$ is given by setting
  \[\Phi_F(x) = \sum_{i \geq k} \Big( \sum_{j=1}^t e_j r_{ij} \Big) z^i
  \ .\] This is a well-defined $R$\nbd-module homomorphism. It is
  injective since the $e_j$ form a basis of~$F$; in detail, $\Phi_F(x)
  = 0$ means that $\sum_j r_{ij} e_j = 0$ for all~$i$ so that, by
  linear independence of the~$e_j$, we have $r_{ij} = 0$ for all~$i$
  and~$j$. But this means $x=0$. --- To prove surjectivity, let $g =
  \sum_{i \geq k} m_i z^i \in F((z))$ be given. Since the $e_j$
  generate~$F$ there are elements $r_{ij} \in R$ with $m_i = \sum_j
  e_j r_{ij}$. Set $f_j = \sum_{i \geq k} r_{ij} z^i$ and $x = \sum_j
  (e_j \tensor f_j)$. Then $\Phi_F(x) = y$, by construction.

  For the general case, choose a presentation $G \rTo F \rTo M \rTo 0$
  of~$M$, with $F$ and~$G$ both finitely generated free. The functor
  $N \mapsto N((z))$ is certainly exact (for a map~$f$ we let $f((z))$
  denote the map~$f$ applied componentwise), so we obtain a
  commutative diagram with exact rows
  \begin{diagram}[small,LaTeXeqno]
    \label{diag:chase}
    && G \tensor_R R((z)) & \rTo & F \tensor_R R((z)) & \rTo & M
    \tensor_R R((z)) & \rTo & 0 \\
    && \dTo<{\Phi_G}>\iso && \dTo<{\Phi_F}>\iso && \dDashto<{\Phi_M} \\
    0 & \rTo & G((z)) & \rTo & F((z)) & \rTo & M((z)) & \rTo & 0
  \end{diagram}
  where the dashed arrow is~$\Phi_M$. In fact, every element $x \in M
  \tensor_R R((z))$ can be written (in at least one way) in the form
  $x = \sum_{j=1}^s m_j \tensor f_j$, with $m_j \in M$ and $f_j =
  \sum_{i \geq k} r_{ij} z^i \in R((z))$, and $\Phi_M(x) = \sum_{i
    \geq k} \big( \sum_{j=1}^s m_j r_{ij} \big) z^i$; commutativity
  of~\eqref{diag:chase} shows that this is well defined. By the Five
  Lemma, the map~$\Phi_M$ is an isomorphism in general as claimed.
\end{proof}

\begin{remark}
  The lemma fails for modules which are not finitely
  generated. Specifically, if $M$~is free of infinite rank one can
  still define a map $M \tensor_R R((z)) \rTo M((z))$, essentially in
  the same way as above, and linear independence of basis elements
  guarantees that this map is injective. Its image consists precisely
  of those formal \textsc{Laurent} series $\sum_{i \geq k} m_i z^i$
  which have the property that the submodule of~$M$ generated by the
  set of coefficients $\{m_i \,|\, i \geq k\}$ is finitely
  generated. Using this and a diagram chase in~\eqref{diag:chase} one
  can show that $\Phi_M$ is surjective whenever $M$~is finitely
  generated; in that case $\Phi_M$~is injective as well if and only if
  $M$~is finitely presented.
\end{remark}

\begin{definition}
  \label{def:novikov_cohomology}
  Let $B$ be a cochain complex of $R[z,z^{-1}]$\nbd-modules. The {\it
    positive \textsc{Novikov} cohomology\/} is the cohomology of the
  cochain complex $B \tensor_{R[z,z^{-1}]} R((z))$. The {\it negative
    \textsc{Novikov} cohomology\/} is the cohomology of the cochain
  complex $B \tensor_{R[z,z^{-1}]} R((z^{-1}))$.
\end{definition}

\begin{definition}
  Let $C$ be a cochain complex of right $R$\nbd-modules, and let $h
  \colon C \rTo C$ be a cochain map. The {\it mapping torus $T(h)$
    of~$h$} is defined by
  \[T(h) = \mathrm{Cone}\, \big( C \tensor_R R[z,z^{-1}]
  \rTo[l>=5em]^{h \tensor \id - \id \tensor z} C \tensor_R R[z,z^{-1}]
  \big)\] where the map ``$z$'' denotes the self map of $R[z,z^{-1}]$
  given by multiplication by the indeterminate~$z$.
\end{definition}

In this definition ``Cone'' stands for the algebraic mapping cone; if
a map of cochain complexes $f \colon X \rTo Y$ is considered as a
double complex~$D^{*,*}$ concentrated in columns $p=-1,0$ with
horizontal differential~$f$, and differential of~$X$ changed by a sign
$-1$, then $\mathrm{Cone}\,(f) = \mathrm{Tot}_\oplus
D^{*,*}$. Explicitly, we have $\mathrm{Cone}\, (f)^n = X^{n+1} \oplus
Y^n$, and the differential is given by the following formula:
\begin{align*}
  \mathrm{Cone}\,(f)^n = X^{n+1} \oplus Y^n &\rTo X^{n+2} \oplus
  Y^{n+1} = \mathrm{Cone}\,(f)^{n+1} \\
  (x,\,y) & \mapsto \quad \big(-d (x),\, f(x) + d (y)\big)
\end{align*}

\begin{theorem}
  \label{thm:torus_cohomology}
  Let $C$ be a (possibly unbounded) cochain complex of finitely
  presented right $R$\nbd-modules, and let $h \colon C \rTo C$ be an
  arbitrary cochain map. Then the negative \textsc{Novikov\/}
  cohomology of the mapping torus~$T(h)$ of~$h$ is trivial, \ie, the
  cochain complex $T(h) \tensor_{R[z,z^{-1}]} R((z^{-1}))$ is
  acyclic. --- If $h$ is a quasi-isomorphism, then the positive
  \textsc{Novikov} homology of~$T(h)$ is trivial as well, \ie, the
  cochain complex $T(h) \tensor_{R[z,z^{-1}]} R((z))$ is acyclic in this
  case.
\end{theorem}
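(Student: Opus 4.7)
The plan is to realise both \textsc{Novikov} cohomologies of $T(h)$ as truncated totalisations of a single double complex $E^{*,*}$, and then invoke Proposition~\ref{prop:main}. I would set $E^{p,q} = C^{p+q+1} \oplus C^{p+q}$, equipped with the mapping-cone vertical differential $d^v(x,y) = (-dx,\, hx + dy)$ and horizontal differential $d^h(x,y) = (0,-x)$; the latter is chosen to encode multiplication by~$-z$. A short calculation shows this is a double complex: $(d^v)^2 = 0$ uses that $h$ is a cochain map, $(d^h)^2 = 0$ is immediate, and $d^h d^v + d^v d^h = 0$ by direct inspection.

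Since $E^{p,n-p} = C^{n+1} \oplus C^n$ is independent of $p$, Lemma~\ref{lem:series} identifies
\[(\totrt E^{*,*})^n \iso C^{n+1}((z^{-1})) \oplus C^n((z^{-1})) = \bigl(T(h) \tensor_{R[z,z^{-1}]} R((z^{-1}))\bigr)^n,\]
and analogously for $\totlt E^{*,*}$ and $R((z))$. A brief bookkeeping step shows that the $z^p$\nbd-component of the combined differential $d^v + d^h$ equals $(-dx_p,\, hx_p - x_{p-1} + dy_p)$, matching the torus differential exactly.

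At this point the proof splits into the two exactness computations. For rows: $d^h(x,y) = (0,-x)$ has kernel and image both equal to $\{0\} \oplus C^{p+q+1}$ at every position, so every row of $E^{*,*}$ is exact, and Proposition~\ref{prop:main} gives the (unconditional) acyclicity of $\totrt E^{*,*} = T(h) \tensor_{R[z,z^{-1}]} R((z^{-1}))$. For columns: after reindexing by $m = p+q$, each column is simply the mapping cone of $h$, which is acyclic precisely when $h$ is a quasi-isomorphism; Proposition~\ref{prop:main} then yields acyclicity of $\totlt E^{*,*} = T(h) \tensor_{R[z,z^{-1}]} R((z))$ in that case.

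The main obstacle I expect is organisational rather than conceptual: the indexing $E^{p,q} = C^{p+q+1} \oplus C^{p+q}$ is unusual because the modules depend on~$p+q$ rather than on $p$ and~$q$ separately, and one must line up signs so that $d^h$ really reproduces the ``$-z$'' shift in the torus differential and that the columns assemble into honest cones of~$h$ (not of $-h$ or a sign-twisted variant). Beyond this bookkeeping, the argument reduces to a direct application of Proposition~\ref{prop:main}.
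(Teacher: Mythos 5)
Your proposal is correct and follows essentially the same route as the paper: it builds the same double complex $D^{p,q} = C^{p+q+1}\oplus C^{p+q}$ with the same horizontal and vertical differentials, uses Lemma~\ref{lem:series} to identify $T(h)\tensor_{R[z,z^{-1}]} R((z^{\pm 1}))$ with $\totrt$ (\resp\ $\totlt$) of that double complex, and then applies Proposition~\ref{prop:main} to the exact rows (\resp\ columns, which are shifts of $\mathrm{Cone}(h)$). The only blemish is a harmless index slip --- the kernel and image of $d^h$ in $D^{p,q}$ are $\{0\}\oplus C^{p+q}$, not $\{0\}\oplus C^{p+q+1}$ --- which does not affect the argument.
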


\begin{proof}
  We deal with negative \textsc{Novikov} cohomology first. Since
  tensor products are additive, we have an equality of cochain
  complexes
  \[T(h) \tensor_{R[z,z^{-1}]} R((z^{-1})) = \mathrm{Cone}\, \big( C
  \tensor_R R((z^{-1})) \rTo[l>=5em]^{h \tensor \id - \id \tensor z} C
  \tensor_R R((z^{-1})) \big) \ .\] Using Lemma~\ref{lem:series} we
  identify the complex $C \tensor_R R((z^{-1}))$ with $C((z^{-1}))$.
  We can now write
  \begin{equation}
    \label{eq:identification}
    T(h) \tensor_{R[z,z^{-1}]} R((z^{-1})) = \totrt (D^{*,*})
    \end{equation}
    where~$D^{*,*}$ is defined as follows:
  \begin{align*}
    D^{p,q} &= C^{p+q+1} \oplus C^{p+q} \\
    d^h &\colon \ \, D^{p,q} \rTo D^{p+1,q} \\
    & \hphantom{\colon\ \,} (x,y) \mapsto (0, -x) \\
    d^v & \colon \ \, D^{p,q} \rTo D^{p,q+1} \\
    & \hphantom{\colon \ \,} (x,y) \mapsto \big( -d^C(x),\,
    h(x)+d^C(y) \big)
  \end{align*}
  Here $d^C$ denotes the coboundary map in the complex~$C$. We have
  $d^h \circ d^h = 0$, and the $p$th column $D^{p,*}$ of~$D^{*,*}$ is
  the $p$th shift of~$\mathrm{Cone}\, (h)$ so that $d^v \circ d^v =
  0$. Finally, the differentials anti-commute: for a typical element
  $(x,y) \in D^{p,q} = C^{p+q+1} \oplus C^{p+q}$ we have
  \begin{multline*}
    d^v \circ d^h (x,y) = d^v (0, -x) = \big( 0, d^C(-x)
    \big) = -\big(0, d^C(x)\big) \\ = -d^h \big( (-d^C(x),
    h(x)+d^C(y)) \big) = -d^h \circ d^v (x,y) \ .
  \end{multline*}
  To complete the identification given in~\eqref{eq:identification} we
  note that the $p$th column of~$D^{*,*}$ corresponds to the terms
  with coefficient $z^p$ in the formal \textsc{Laurent} series
  notation. --- Now the rows of $D^{*,*}$ are clearly exact so
  that $\totrt (D^{*,*})$ is acyclic by Proposition~\ref{prop:main}.

  For positive \textsc{Novikov} cohomology we note that since $C
  \tensor_R R((z)) = C((z))$ we can identify $T(h)
  \tensor_{R[z,z^{-1}]} R((z))$ with $\totlt (D^{*,*})$. If $h$ is a
  quasi-isomorphism then $\mathrm{Cone}\,(h)$ is acyclic so that the
  columns of~$D^{*,*}$ are exact. By Proposition~\ref{prop:main},
  $\totlt(D^{*,*})$ is acyclic.
\end{proof}

\begin{remark}
  Suppose $h$ is the map $\bZ \rTo \bZ$ given by multiplication
  by~$2$, considered as a cochain map concentrated in cochain
  degree~$0$. Then the~$D^{*,*}$ in the proof above is the double
  complex described in Remark~\ref{rmk:not-acyclic} which has the
  property that $\totlt (D^{*,*})$ is not acyclic. This provides an
  example of a map~$h$ whose mapping torus has trivial negative but
  non-trivial positive \textsc{Novikov\/} cohomology.

  The asymmetry stems from the fact that the definition of mapping
  tori involves a choice. One could have defined the mapping torus
  of~$h$ as the mapping cone of $h \tensor \id - \id \tensor z^{-1}$
  in which case the roles of positive and negative \textsc{Novikov}
  cohomology in Theorem~\ref{thm:torus_cohomology} are reversed. This
  can be shown by identifying the $p$th column of~$D^{*,*}$ in the
  proof above with the coefficients of~$z^{-p}$ in the
  \textsc{Laurent} series notation, or by using double complexes with
  differentials going down and left (in which case the roles of
  $\totlt$ and $\totrt$ are swapped in Proposition~\ref{prop:main}).
\end{remark}

\begin{corollary}
  \label{cor:ranicki}
  Suppose that $C$ is a bounded above cochain complex of projective
  right $R[z,z^{-1}]$\nbd-modules. Suppose further that $C$ is
  homotopy equivalent, as an $R$\nbd-module complex, to a bounded
  complex~$B$ of finitely generated projective right
  $R$\nbd-modules. Then $C$ has trivial positive and negative
  \textsc{Novikov} cohomology, that is, the two cochain complexes $C
  \tensor_{R[z,z^{-1}]} R((z))$ and $C \tensor_{R[z,z^{-1}]}
  R((z^{-1}))$ are acyclic.
\end{corollary}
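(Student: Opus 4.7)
My plan is to realise~$C$, up to cochain homotopy equivalence over $R[z,z^{-1}]$, as a mapping torus~$T(h)$ of a cochain self-map~$h$ of a bounded complex of finitely presented $R$-modules, and then to invoke Theorem~\ref{thm:torus_cohomology} directly.

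To set this up, I would fix $R$-linear cochain maps $p \colon B \rTo C$ and $q \colon C \rTo B$ realising the assumed homotopy equivalence, together with a homotopy~$H$ witnessing $pq \simeq \id_C$. Writing $\zeta \colon C \rTo C$ for the $R$-linear cochain endomorphism ``multiplication by~$z$'', set $h := q \circ \zeta \circ p \colon B \rTo B$. Then $h$ is a cochain self-map of the bounded complex~$B$ of finitely generated projective (hence finitely presented) $R$-modules; moreover~$h$ is a quasi-isomorphism, since $\zeta$ is invertible (its inverse being multiplication by~$z^{-1}$) and $p$, $q$ induce mutually inverse isomorphisms on cohomology.

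The crucial step is to exhibit a homotopy equivalence $\phi \colon T(h) \rTo C$ of $R[z,z^{-1}]$\nbd-module complexes. Explicitly, on the description $T(h)^n = B^{n+1} \tensor_R R[z,z^{-1}] \oplus B^n \tensor_R R[z,z^{-1}]$ one can take the $R[z,z^{-1}]$\nbd-linear extension of $(x \tensor 1,\, y \tensor 1) \mapsto H\zeta p(x) + p(y)$; a direct calculation using that~$p$ and~$\zeta$ are cochain maps and the homotopy identity $pq - \id_C = dH + Hd$ shows that~$\phi$ is a cochain map. To see that~$\phi$ is in fact a homotopy equivalence one can compare the defining cofiber sequence of~$T(h)$ with the canonical realisation of every $R[z,z^{-1}]$\nbd-module complex as a mapping torus of its $z$-action, and invoke that a quasi-isomorphism between bounded above complexes of projective $R[z,z^{-1}]$\nbd-modules is automatically a cochain homotopy equivalence. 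This translation from finite domination data to mapping torus data is the principal technical obstacle in the argument.

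Once the equivalence $T(h) \simeq C$ is in hand, Theorem~\ref{thm:torus_cohomology} yields acyclicity of $T(h) \tensor_{R[z,z^{-1}]} R((z^{-1}))$ unconditionally, and of $T(h) \tensor_{R[z,z^{-1}]} R((z))$ because~$h$ is a quasi-isomorphism. Since tensoring over $R[z,z^{-1}]$ preserves cochain homotopy equivalences, the same vanishing transfers to $C \tensor_{R[z,z^{-1}]} R((z))$ and $C \tensor_{R[z,z^{-1}]} R((z^{-1}))$, establishing the triviality of both positive and negative \textsc{Novikov} cohomology of~$C$.
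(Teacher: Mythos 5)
Your proof is essentially the same as the paper's. Both arguments set $h$ equal to the composite self-map of~$B$ obtained by conjugating ``multiplication by~$z$'' through the $R$\nbd-linear homotopy equivalence (your $q \zeta p$ is the paper's $fzg$), observe that $h$ is a quasi-isomorphism because~$z$ acts invertibly on~$C$, identify~$C$ with the mapping torus~$T(h)$ up to $R[z,z^{-1}]$\nbd-linear homotopy equivalence, and then apply Theorem~\ref{thm:torus_cohomology}. The only divergence is in how the equivalence $T(h) \simeq C$ is justified: the paper cites \cite[\S\S2--3]{Iteration} for the zig-zag $C \leftarrow T(zgf) \rightarrow T(fzg)$, whereas you write down an explicit cochain map $\phi\colon T(h)\to C$, $(x\tensor 1,\, y\tensor 1)\mapsto H\zeta p(x)+p(y)$, and sketch why it is a quasi-isomorphism (via comparison with the realisation of~$C$ as a mapping torus of its own $z$\nbd-action, combined with the fact that quasi-isomorphisms of bounded-above projective complexes are homotopy equivalences). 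Your explicit formula does satisfy $d_C\,\phi=\phi\, d_{T(h)}$, so the map is well defined; the step you flag as the ``principal technical obstacle'' --- verifying that $\phi$ is a quasi-isomorphism --- is precisely what the cited reference supplies, and your sketch of it is plausible but not spelled out. In short: same route, with the key homotopy-equivalence lemma sketched rather than cited.
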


\begin{proof}
  Let $f \colon C \rTo B$ and $g \colon B \rTo C$ mutually inverse
  $R$\nbd-linear homotopy equivalences. There are
  $R[z,z^{-1}]$\nbd-linear homotopy equivalences
  \[C \lTo T(zgf) \rTo T(fzg)\] where ``$z$'' denotes the self map
  given by multiplication by~$z$; a proof can be found, for example,
  in \cite[\S \S2--3]{Iteration}. It follows that the \textsc{Novikov}
  cohomology of~$C$ and of~$T(fzg)$ are the same. Now $fzg$ is a
  homotopy equivalence as $z$~acts invertibly on~$C$, and
  Theorem~\ref{thm:torus_cohomology} assures us that $T(fzg)$ has
  trivial positive and negative \textsc{Novikov} cohomology.
\end{proof}

This Corollary is the ``only-if'' part of a result obtained by
\textsc{Ranicki} \cite[Theorem~2]{Ranicki-findom} using different
methods; the present proof has the advantage of being completely
elementary.

\begin{remark}
  \label{rem:T_by_double}
  With $D^{*,*}$ as in the proof of Theorem~\ref{thm:torus_cohomology}
  we can identify $T(h)$ with $\mathrm{Tot}_\oplus D^{*,*}$. Note that
  $D^{*,*}$ has exact rows, and has exact columns if $h$~is a
  quasi-isomorphism. In view of Remark~\ref{rmk:not-acyclic} this does
  {\it not\/} imply that $T(h)$ is acyclic.
\end{remark}

\providecommand{\bysame}{\leavevmode\hbox to3em{\hrulefill}\thinspace}
\providecommand{\href}[2]{#2}

\end{document}